\DeclareMathOperator*{\argmin}{argmin}
\newcommand{\st}{\widetilde{s}}
\newcommand{\Ht}{\widetilde{H}}
\newcommand{\lambdat}{\widetilde{\lambda}}
\newcommand{\varphit}{\widetilde{\varphi}}
\newcommand{\sing}{\mbox{\footnotesize sing}}
\newcommand{\zb}{\overline{z}}
\newcommand{\ds}{\displaystyle}
\newcommand{\nexto}{\kern -0.54em}
\newcommand{\dR}{{\rm {I\ \nexto R}}}
\newcommand{\dZ}{{\cal Z \kern -0.7em Z}}
\newcommand{\dC}{{\rm\hbox{C \kern-0.8em\raise0.2ex\hbox{\vrule
height5.4pt width0.7pt}}}}
\newcommand{\dQ}{{\rm\hbox{Q \kern-0.85em\raise0.25ex\hbox{\vrule
height5.4pt width0.7pt}}}}
\newcommand{\proofbox}{\hspace{\fill}{$\Box$}}
\newtheorem{corollary}{Corollary}
\newtheorem{proposition}{Proposition}
\newtheorem{fact}{Fact}
\newtheorem{remark}{Remark}
\newenvironment{proof}{Proof.}{\proofbox}
\begin{document}

\author{Authors}

\author{
C. Yal{\c c}{\i}n Kaya\footnote{Mathematics, UniSA STEM, University of South Australia, Australia. E-mail: yalcin.kaya@unisa.edu.au\,.}
}

\title{\vspace{-10mm}\bf Observer Path Planning for Maximum Information}

\maketitle

\centerline{\large Dedicated to the memory of Alex Rubinov}

\begin{abstract} 
\noindent {\sf This paper is concerned with finding an optimal path for an observer, or sensor, moving at a constant speed, which is to estimate the position of a stationary target, using only bearing angle measurements. The generated path is optimal in the sense that, along the path, information, and thus the efficiency of a potential estimator employed, is maximized. In other words, an observer path is deemed optimal if it maximizes information so that the location of the target is estimated with smallest uncertainty, in some sense. We formulate this problem as an optimal control problem maximizing the determinant of the Fisher information matrix, which is one of the possible measures of information.  We derive analytical results for optimality using the Maximum Principle.  We carry out numerical experiments and discuss the multiple (locally) optimal solutions obtained.  We verify graphically that the necessary conditions of optimality are verified by the numerical solutions.  Finally we provide a comprehensive list of possible extensions for future work.}
\end{abstract}

\begin{verse} 
{\em Key words}\/: {\sf Observer path planning, target localization, optimal control, bang--bang control, singular control, Fisher information.}
\end{verse}
\begin{verse}
 {\em Mathematics Subject Classification}\/: {49K15, 49M25, 65K99}
\end{verse}

\pagestyle{myheadings}
\markboth{}{\sf\scriptsize Observer Path Planning for Maximum Information\ \ by C. Y. Kaya}

\section{Introduction and Problem Statement}
\label{problem}

We aim to find the optimal path of an observer, or sensor, moving at a constant speed, which is to estimate the position of a {\em stationary} target, using only bearing angle measurements.  The process of this estimation is referred to as {\em target localization}, since the target is stationary.  The generated path is optimal in the sense that, along the path, the observer, or sensor, is efficient. The efficiency is achieved when some measure of information is maximized, as will be discussed in detail later.  The notation we adopt below is a broad combination of the notations used in, which are now classical, references~\cite{HamLiuHilGon1989, OshDav1999, PasCap1998}.

The {\em observer position} in the two-dimensional space (or the plane) at time $t$ is given as $(x_s(t),y_s(t))$, as depicted in~Figure~\ref{fig:bearing}.  The fixed target coordinates are denoted by $(x_T,y_T)$.  The objective of the observer is to estimate the target coordinates by using a sequence of $N$ {\em bearing angle} measurements $\{\beta_1,\beta_2,\ldots,\beta_N\}$, taken over the time interval $[0,t_f]$, where $t_f$ is referred to as the {\em final} or {\em terminal time}, which is fixed.

\begin{figure}[t]
\begin{center}
\psfrag{theta}{$\theta(t)$}
\psfrag{beta}{$\beta(t)$}
\psfrag{xsys}{$(x_s(t),y_s(t))$}
\psfrag{xTyT}{$(x_T,y_T)$}
\psfrag{v}{$v$}
\psfrag{Target}{Target}
\psfrag{Observer}{Observer}
\includegraphics[width=70mm]{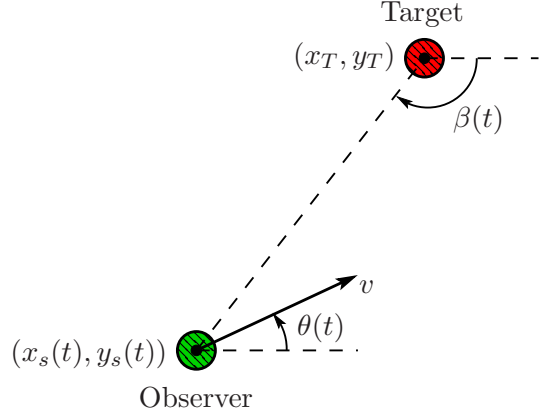}
\end{center}
\caption{\sf Geometric configuration of the observer and target.}
\label{fig:bearing}
\end{figure}

Suppose that the observer, for example a ship, an autonomous underwater vehicle, or an uninhabited aerial vehicle (a UAV or drone), is modelled as a point mass which moves at a constant speed $v$.  Then the equations of motion of the observer are given by
\begin{eqnarray*}
&& \dot{x}_s(t) = v\,\cos\theta(t)\,,\ \ x_s(0) = x_{s_0}\,, \\[1mm]
&& \dot{y}_s(t) = v\,\sin\theta(t)\,,\ \ \,\,y_s(0) = y_{s_0}\,,
\end{eqnarray*}
where $\dot{x}_s = dx_s/dt$, etc., and the {\em observer course} $\theta(t)$ is the angle the observer's velocity vector makes with the $x$-axis in the counter-clockwise direction at time $t$.

Next, we define the {\em relative observer position}\,:
\[
(x(t),y(t)) := (x_s(t) - x_T,\  y_s(t) - y_T)\,.
\]
Then the observer's motion equations can simply be written in terms of the relative coordinates as
\begin{eqnarray}
&& \dot{x}(t) = v\,\cos\theta(t)\,,\ \ x(0) = x_0\,, \label{eqnx} \\[1mm]
&& \dot{y}(t) = v\,\sin\theta(t)\,,\ \ \,y(0) = y_0\,,\label{eqny}
\end{eqnarray}
where $x_0 =  x_s(0) - x_T$ and $y_0 =  y_s(0) - y_T$.  From the geometry in~Figure~\ref{fig:bearing}, the measurement equation for the, now time-dependent, {\em bearing angle} $\beta(t)$ is
\begin{equation}  \label{beta}
  \tan(\beta(t) + w(t)) = \frac{y(t)}{x(t)}\,,
\end{equation}
where, for any $t\in[0,t_f]$, $w(t)$ comes from a zero-mean Gaussian distribution with fixed variance $\sigma^2$.  So, it is assumed that the bearing angle measurements are taken continuously throughout the time horizon $[0,t_f]$; i.e., $N\to\infty$.  We note that in practice Equation~\eqref{beta} is solved numerically by using the two-argument variant of the $\arctan$ function, which takes into account the signs of $x(t)$ and $y(t)$.

An observer path is deemed optimal if it maximizes information so that the location of the target is estimated with smallest uncertainty in some sense.  One possible measure of information is the determinant of the {\em Fisher information matrix}, the matrix which is also considered in \cite{HamLiuHilGon1989, PasCap1998}:
\begin{equation}  \label{FIM}
F(\theta) = \frac{1}{\sigma^2}\left[
\begin{array}{rr}
\ds\int_0^{t_f}\frac{y^2(\tau)}{(x^2(\tau) + y^2(\tau))^2}\,d\tau & \quad  
\ds -\int_0^{t_f}\frac{x(\tau)\,y(\tau)}{(x^2(\tau) + y^2(\tau))^2}\,d\tau \\[6mm]
\ds -\int_0^{t_f}\frac{x(\tau)\,y(\tau)}{(x^2(\tau) + y^2(\tau))^2}\,d\tau  & \quad
\ds\int_0^{t_f}\frac{x^2(\tau)}{(x^2(\tau) + y^2(\tau))^2}\,d\tau
\end{array}
\right],
\end{equation}
subject to \eqref{eqnx}--\eqref{eqny}.  It should be noted that $F(\theta)$ is the inverse of the covariance matrix for the maximum likelihood estimation error, and thus maximizing the determinant of $F(\theta)$ is equivalent to minimizing the area of the {\em uncertainty ellipse} defined by the (positive-definite) covariance matrix.  Therefore the maximization of the determinant of $F(\theta)$ effectively minimizes the error made in estimating the position of a target (by a potential estimator), in the sense explained here. 

In \cite{HamLiuHilGon1989, OshDav1999}, direct discretizations of $F(\theta)$ and the ODEs \eqref{eqnx}--\eqref{eqny} are carried out; namely the Riemannian sum for the integrals in \eqref{FIM} and the corresponding Euler discretization of \eqref{eqnx}--\eqref{eqny} are considered.  Subsequently, approximate solutions maximizing the determinant of $F(\theta)$ are obtained by employing numerical techniques, particularly decent methods, for finite-dimensional optimization.

In~\cite{HamLiuHilGon1989}, an expression for the optimal path is derived via necessary optimality conditions for the unconstrained optimal control problem of minimizing a simpler minorant function, i.e., a function which bounds pointwise from below the determinant of~\eqref{FIM}.  In~\cite{PasCap1998}, the optimality conditions for the problem of minimizing the determinant of~\eqref{FIM} are written down for the case when both the state and control variables are unconstrained, and Euler discretization of the (Euler--Lagrange) optimality conditions are solved, again by using descent methods.

In the current paper we develop a new optimal control model with the extensions of (i)~prescription of the initial course (or heading angle) and (ii) imposition of an upper bound for the curvature, i.e., a lower bound of the turning radius.  By using the Maximum Principle, we obtain the necessary conditions of optimality for the new problem, presenting results about the optimal control.  Obviously the new problem is tractable only numerically; so, via Euler discretization, we obtain approximate solutions over various time horizons, by solving large-scale optimization problems.  We present numerical results about the type of optimal control structure: We show that the optimal control for the particular example we are studying can be a single {\em bang} arc, a {\em bang--singular} arc or a {\em bang--singular--bang} arc.

The new model and its solution techniques introduced in the present paper can be extended to cater for state constraints, multiple observers (see, e.g. \cite{Dogancay2012}) and moving targets---see Section~\ref{conclusion} for a discussion of future directions.  Optimal paths generated for maximum information could be instrumental in testing the performance of estimators~\cite{RisAru2003}, both for the cases of localization (stationary target) and tracking (moving target).  So, the significance of the study presented is attested by many real-life applications.

The paper is organized as follows.  In Section~\ref{formulation}, we formulate the problem we have stated above as an optimal control problem for both the cases when (i) the initial course is unspecified and the control is unconstrained and (ii) the initial course is specified and the control is constrained by a bound.  We also derive the necessary conditions of optimality via the Maximum Principle for each of the two problems, and present some analytical results.  In Section~\ref{formulation}, we solve the two problems numerically for an example configuration from the literature and verify the optimality conditions for each solution that we obtain.  In Section~\ref{conclusion}, after summarising what has been achieved in the current paper, we provide a comprehensive list of possible extensions for future work.

\newpage
\section{Formulation and Analysis as an Optimal Control Problem}
\label{formulation}

The problem of maximizing the determinant of the Fisher information matrix $F(\theta)$ subject to \eqref{eqnx}--\eqref{eqny} can be reformulated as an optimal control problem in standard form as follows.  First, define new variables:
\begin{eqnarray*}
&& z_1(t) := \int_0^{t}\frac{x^2(\tau)}{(x^2(\tau) + y^2(\tau))^2}\,d\tau\,,\quad z_1(0) = 0\,, \\[2mm]
&& z_2(t) := \int_0^{t}\frac{y^2(\tau)}{(x^2(\tau) + y^2(\tau))^2}\,d\tau\,, \quad z_2(0) = 0\,, \\[2mm]
&& z_3(t) := \int_0^{t}\frac{x(\tau)\,y(\tau)}{(x^2(\tau) + y^2(\tau))^2}\,d\tau\,, \quad z_3(0) = 0\,, 
\end{eqnarray*}
for $t\in[0,t_f]$.  With these new variables, the determinant of $F(\theta)$ can simply be written as
\[
\det(F(\theta)) = \frac{1}{\sigma^4}\left(z_1(t_f)\,z_2(t_f) - z_3^2(t_f)\right).
\]
Then the problem of finding an optimal observer path for target localization can now be expressed as the maximization of the functional $\det(F(\theta))$ as follows.
\[
\mbox{(P1) }\left\{\begin{array}{rll}
\ds\max & \ \sigma^4\det(F(\theta))\  & \hspace*{-15mm}\equiv\ \ds\min\ \left(z_3^2(t_f) - z_1(t_f)\,z_2(t_f)\right)    \\[4mm] 
\mbox{subject to} & 
\ \dot{x}(t) = v\,\cos\theta(t)\,, &\ \ x(0) = x_0\,, \\[2mm]
& \ \dot{y}(t) = v\,\sin\theta(t)\,, &\ \ y(0) = y_0\,, \\[2mm]
& \ \ds\dot{z}_1(t) = \frac{x^2(t)}{(x^2(t) + y^2(t))^2}\,, &\ \ z_1(0) = 0\,, \\[3mm]
& \ \ds\dot{z}_2(t) = \frac{y^2(t)}{(x^2(t) + y^2(t))^2}\,, &\ \ z_2(0) = 0\,, \\[3mm]
& \ \ds\dot{z}_3(t) = \frac{x(t)\,y(t)}{(x^2(t) + y^2(t))^2}\,, &\ \ z_3(0) = 0\,, \mbox{ a.e. } t\in[0,t_f]\,.
\end{array} \right.
\]
This is an optimal control problem in standard form, where the {\em state variables} are $x$, $y$, $z_1$, $z_2$ and $z_3$, and the {\em control variable} is $\theta$.  The objective functional is given in terms of the terminal values of the state variables; therefore, it is in the so-called {\em Mayer form}.  Because of the complexity of Problem~(P1), it is possible to find a solution only numerically.  Experiments show that finding an accurate numerical solution to Problem~(P1), even with mild extensions, is a challenging task.  

In \cite{HamLiuHilGon1989, PasCap1998, OshDav1999}, the problem the authors solve amounts to a {\em coarse} Euler discretization of Problem~(P1).  Neither in~\cite{HamLiuHilGon1989, PasCap1998, OshDav1999} nor, to the knowledge of the author, in the rest of the literature, the initial course $\theta(0)$ is specified.  However, as will be motivated by the numerical solution of Problem~(P1) presented in Section~\ref{sec:P1}, absence of the initial heading angle (or course) of the vehicle is not quite realistic.  So we pose below a second problem in which this modelling oversight is addressed.
\[
\mbox{(P2) }\left\{\begin{array}{rll}
\mbox{Solve} & \ \mbox{Problem~(P1)} & \hspace*{-20mm}\ \  \\[4mm] 
\mbox{subject to} & 
\ \dot{\theta}(t) = u(t)/v\,, & \theta(0) = \theta_0\,, \mbox{ a.e. } t\in[0,t_f]\,.
\end{array} \right.
\]
In Problem~(P2), $\theta$ is no longer the control variable but a state variable.  The {\em state variables} in Problem~(P2) are $x$, $y$, $\theta$, $z_1$, $z_2$ and $z_3$, and the {\em control variable} is $u$.  There is a fatal flaw about Problem~(P2), however:  With the initial course $\theta(0)$ specified, there does not exist a solution, unless the choice of $\theta(0)$ happens to be an optimal solution of Problem~(P1).  We note that, in general, with no restrictions on the rate $\dot{\theta}$ of the heading angle in Problem~(P2), the control function $\theta$ will have an impulsive (Dirac delta) function component with a jump at $t = 0$, which is not allowed by the Maximum Principle we will consider.  

Moreover, in a realistic model, sharp turns are prohibited by the physical capabilities of the observer vehicle.  Therefore, a sensible thing to do is to impose a bound on the turning rate $\dot\theta(t) = u(t)/v$, with $u:[0,t_f]\to\dR$ the new variable.  For example, $|u(t)|\le v\,a$ is added to the model, where $a$ constitutes an upper bound for the {\em curvature} of the path and thus $v/a$ is the corresponding lower bound on the {\em turning radius}.  This restriction motivates us to pose the following realistic problem~(P3), instead of Problems~(P1) and (P2). 
\[
\mbox{(P3) }\left\{\begin{array}{rll}
\mbox{Solve} & \ \mbox{Problem~(P1)} & \hspace*{-20mm}\ \  \\[4mm] 
\mbox{subject to} 
 & \ \dot{\theta}(t) = u(t)/v\,, & \theta(0) = \theta_0\,, \\[2mm]
 & \ -v\,a \le u(t) \le v\,a\,, & \mbox{ a.e. } t\in[0,t_f]\,.
\end{array} \right.
\]
Note that when $\theta(0)$ is free and the control variable $u$ is not bounded (in that $a\to\infty$), Problem~(P3) reduces to Problem~(P1).  To the best knowledge of the author, Problem~(P3) has not previously been studied in the literature, analytically or numerically.

\subsection{Optimality Conditions for Problem~(P3)}

In this section, we will derive the necessary conditions of optimality for Problem~(P3), using the Maximum Principle.  Various forms of the Maximum Principle and their proofs can be found in a number of reference books -- see, for example, \cite[Theorem~1]{PonBolGanMis1986}, \cite[Chapter~7]{Hestenes1966}, \cite[Theorem~6.4.1]{Vinter2000}, \cite[Theorem~6.37]{Mordukhovich2006}, and \cite[Theorem~22.2]{Clarke2013}.  We will state the Maximum Principle suitably utilizing these references for our setting and notation.

First we define the state variable vector $s := (x,y,\theta,z_1,z_2,z_3)$ for brevity.  Note that $s(t)\in\dR^6$. Then we define the Hamiltonian function $H:\dR^6 \times \dR \times \dR^6 \to \dR$ for Problem~$(P3)$ in a standard manner as
\begin{equation} \label{Ham}
H(s,u,\lambda) := \lambda_x\,v\,\cos\theta + \lambda_y\,v\,\sin\theta + \lambda_\theta\,\frac{u}{v} +\lambda_{z_1}\,\frac{x^2}{(x^2 + y^2)^2} + \lambda_{z_2}\,\frac{y^2}{(x^2 + y^2)^2} + \lambda_{z_3}\,\frac{x\,y}{(x^2 + y^2)^2}\,,
\end{equation}
where $\lambda(t) := (\lambda_x(t),\lambda_y(t),\lambda_\theta(t),\lambda_{z_1}(t),\lambda_{z_2}(t),\lambda_{z_3}(t))\in\dR^6$ is the adjoint (or costate) variable vector.  In~\eqref{Ham}, we do not show the dependence of the variables on $t$, for clarity in appearance.  Keeping up with the tradition we introduce the notation
\[
H[t] := H(s(t),u(t),\lambda(t))\,.
\]
We also define the function $\varphi:\dR^6\to\dR$ such that
\[
\varphi(s(t_f)) := z_3^2(t_f) - z_1(t_f)\,z_2(t_f)\,.
\]
The adjoint variable vector is assumed to satisfy the differential equation and the boundary condition (see e.g.~\cite{Hestenes1966})
\begin{equation} \label{eq:adjoint}
\dot{\lambda}(t) := -H_s[t]\,,\quad \lambda(t_f) = \varphi_s (s(t_f)),
\end{equation}
where $H_s := \partial H / \partial s$ and $\varphi_s := \partial\varphi / \partial s$.  The boundary condition in~\eqref{eq:adjoint} is referred to as the {\em transversality condition}.  Componentwise, \eqref{eq:adjoint} can be re-written as
\begin{subequations}
\begin{eqnarray}
&& \dot{\lambda}_x(t) = -H_{x}[t]\,,\quad\ \  \lambda_x(t_f) = 0\,, \label{lambda_x}\\[1mm]
&& \dot{\lambda}_y(t) = -H_{y}[t]\,,\quad\ \  \lambda_y(t_f) = 0\,, \label{lambda_y}\\[1mm]
&& \dot{\lambda}_\theta(t) = -H_{\theta}[t]\,,\quad\ \  \lambda_\theta(t_f) = 0\,, \label{lambda_theta}\\[1mm]
&& \dot{\lambda}_{z_1}(t) = -H_{z_1}[t]\,,\quad  \lambda_{z_1}(t_f) = -z_2(t_f)\,, \label{lambda_z1}\\[1mm]
&& \dot{\lambda}_{z_2}(t) = -H_{z_2}[t]\,,\quad  \lambda_{z_2}(t_f) = -z_1(t_f)\,, \label{lambda_z2}\\[1mm]
&& \dot{\lambda}_{z_3} (t)= -H_{z_3}[t]\,,\quad  \lambda_{z_3}(t_f) = 2\,z_3(t_f)\,. \label{lambda_z3}
\end{eqnarray}
\end{subequations}
Clearly, $H_{z_i} = 0$, for $i = 1,2,3$.  Then \eqref{lambda_z1}--\eqref{lambda_z3} yield
\[
\lambda_{z_1}(t) = -z_2(t_f) =: -\zb_2\,,\qquad 
\lambda_{z_2}(t) = -z_1(t_f) =: -\zb_1\,,\qquad 
\lambda_{z_3}(t) = 2\,z_3(t_f) =: 2\,\zb_3\,,
\]
for all $t\in[0,t_f]$.  After expanding the right-hand sides of the ODEs in \eqref{lambda_x}--\eqref{lambda_theta}, and carrying out manipulations, one gets
\begin{subequations}
\begin{eqnarray}
&& \dot{\lambda}_x(t) = \frac{-2\zb_2 x^3(t) + 6\zb_3 x^2(t)y(t) + 2(\zb_2 - 2\zb_1) x(t)y^2(t) - 2\zb_3 y^3(t)}{(x^2(t) + y^2(t))^3}\,,\ \ \lambda_x(t_f) = 0\,, \label{ODE_lambda_x}\\[2mm]
&& \dot{\lambda}_y(t) = \frac{-2\zb_1 y^3(t) + 6\zb_3 y^2(t)x(t) + 2(\zb_1 - 2\zb_2) y(t)x^2(t) - 2\zb_3 x^3(t)}{(x^2(t) + y^2(t))^3}\,,\ \ \lambda_y(t_f) = 0\,, \label{ODE_lambda_y}\\[2mm]
&& \dot{\lambda}_\theta(t) = v \left(\lambda_x(t)\,\sin\theta(t) - \lambda_y(t)\,\cos\theta(t)\right),\quad\ \  \lambda_\theta(t_f) = 0\,. \label{ODE_lambda_theta}
\end{eqnarray}
\end{subequations}

\noindent
{\bf Maximum Principle} \\[1mm]
Suppose that the pair $(s,u)\in W^{1,2}([0,t_f];\dR^6) \times L^\infty([0,t_f];\dR)$ is optimal for Problem~(P3).  Then there exists a continuous adjoint variable vector $\lambda\in W^{1,2}([0,t_f];\dR^6)$ as defined in~\eqref{eq:adjoint}, such that $\lambda(t)\neq{\bf 0}$ for all $t\in[0,t_f]$, and that, for a.e. $t\in[0,t_f]$,
\begin{equation}\label{eqn:u_opt}
u(t) = \argmin_{|w| \le va} H(s(t),w,\lambda(t))\,.
\end{equation}
We note that the Maximum Principle originally involves the maximization of an objective functional and asserts Condition~\eqref{eqn:u_opt} as the maximization of the Hamiltonian.  Since we have written down Problem~(P3) as a minimization problem, Condition~\eqref{eqn:u_opt} is expressed as the minimization of the Hamiltonian.  Owing to minimizing, rather than maximizing, the Hamiltonian, the {\em Maximum Principle} is sometimes referred to as the {\em Minimum Principle} by some authors, see e.g. \cite{OsmMau2012}.  

Using \eqref{Ham}, and noting that $v>0$ is constant, the condition~\eqref{eqn:u_opt} can simply be re-written as
\begin{equation}\label{eqn:u_opt1}
u(t) = \argmin_{|w| \le va} \lambda_\theta(t)\,w\,.
\end{equation}

\begin{proposition}  \label{prop:opt_contr}
The optimal control for Problem~(P3) is given by
\begin{equation}  \label{eqn:u_opt2}
u(t) = \left\{\begin{array}{rl}
v\,a\,, &\ \ \mbox{if\ \ } \lambda_\theta(t) < 0\,, \\[2mm]
-v\,a\,, &\ \ \mbox{if\ \ } \lambda_\theta(t) > 0\,, \\[2mm]
\mbox{undetermined}\,, &\ \ \mbox{if\ \ } \lambda_\theta(t) = 0\,,
\end{array} \right.
\end{equation}
for a.e. $t\in[0,t_f]$.
\end{proposition}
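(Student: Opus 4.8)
The plan is to read off the minimizer directly from the reduced condition~\eqref{eqn:u_opt1}, exploiting the fact that the Hamiltonian~\eqref{Ham} depends on the control $u$ only through the single affine term $\lambda_\theta\,u/v$. First I would fix an arbitrary time $t\in[0,t_f]$ at which $\lambda_\theta(t)$ is defined, and regard the map $w\mapsto\lambda_\theta(t)\,w$ as a linear function on the compact interval $[-v\,a,\,v\,a]$. Since a linear function on a compact interval attains its minimum at an endpoint whenever its slope is nonzero, the argmin problem immediately splits into three cases according to the sign of the slope $\lambda_\theta(t)$.

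In the case $\lambda_\theta(t)<0$, increasing $w$ decreases the product $\lambda_\theta(t)\,w$, so the minimum over $[-v\,a,\,v\,a]$ is attained at the right endpoint $w=v\,a$; in the case $\lambda_\theta(t)>0$ the roles are reversed and the minimum is attained at the left endpoint $w=-v\,a$. In the remaining case $\lambda_\theta(t)=0$, the product $\lambda_\theta(t)\,w$ vanishes for every admissible $w$, so every $w\in[-v\,a,\,v\,a]$ is a minimizer and the argmin in~\eqref{eqn:u_opt1} fails to single out a unique value; this is precisely what we record as ``undetermined''. Collecting the three cases yields the claimed formula~\eqref{eqn:u_opt2}, the almost-everywhere qualifier being inherited directly from the Maximum Principle, which guarantees~\eqref{eqn:u_opt1} only for a.e.\ $t\in[0,t_f]$.

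There is essentially no hard analytic step here: the argument is the standard derivation of a bang--bang structure from an affine dependence on the control, with $\lambda_\theta$ playing the role of the switching function. The only point requiring care is the interpretation of the singular case $\lambda_\theta(t)=0$. The Maximum Principle alone leaves $u(t)$ genuinely undetermined on any time interval where $\lambda_\theta$ vanishes, and pinning down the corresponding singular control requires a separate analysis, differentiating the switching function $\lambda_\theta$ in time and using the adjoint equation~\eqref{ODE_lambda_theta} together with~\eqref{ODE_lambda_x}--\eqref{ODE_lambda_y}. That refined computation lies beyond the present pointwise statement, which asserts only that no information about $u(t)$ is supplied by~\eqref{eqn:u_opt1} when $\lambda_\theta(t)=0$; I would therefore defer it to a subsequent treatment of the singular arcs.
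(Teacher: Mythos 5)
Your proof is correct and takes exactly the route the paper does: the paper's own proof is the one-line remark that the result ``immediately follows from \eqref{eqn:u_opt1}'', and your argument is simply the explicit version of that observation, minimizing the affine function $w\mapsto\lambda_\theta(t)\,w$ over the compact interval $[-v\,a,\,v\,a]$ case by case on the sign of $\lambda_\theta(t)$. Your closing remark that the singular case requires the separate analysis via $\dot\lambda_\theta$ and \eqref{ODE_lambda_theta} matches how the paper proceeds in Proposition~\ref{prop:singular}.
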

\begin{proof}
The proof immediately follows from \eqref{eqn:u_opt1}.
\end{proof}

\noindent \\[-4mm]
{\bf Bang--bang and singular control} \\[1mm]
If $\lambda_\theta \neq 0$ for a.e. $t\in[r',r'']\subset[0,t_f]$, i.e., $\lambda_\theta(t) = 0$ only at isolated points in $[r',r'']$, then the optimal control is referred to as {\em bang--bang} in the interval $[r',r'']$.  In this case, the optimal control might {\em switch} from $\lim_{t\to t_1^-} u(t) = -va$\ \ to\ \ $\lim_{t\to t_1^+} u(t) = va$, or from $\lim_{t\to t_1^-} u(t) = va$\ \ to\ \ $\lim_{t\to t_1^+} u(t) = -va$, at a {\em switching time}  $t_1\in [r',r'']$, with $\lambda_\theta(t_1) = 0$.

If $\lambda_\theta(t) = 0$ for a.e. $t\in[t',t'']\subset[0,t_f]$, then the optimal control is said to be {\em singular} in the interval $t\in[t',t'']$.  Note that the optimal control might also switch from a bang arc to a singular arc, and vice versa.

Since a switching will occur when there is typically a change in the sign of $\lambda_\theta(t)$, the continuous function $\lambda_\theta:[0,t_f]\to\dR$ appearing in the conditions in \eqref{eqn:u_opt2} is suitably named as the {\em switching function}.

Next we state a simple but useful fact about the switching function $\lambda_\theta$.
\begin{fact}  \label{deriv_switching}
$\dot{\lambda}_\theta(t_f) = 0$.
\end{fact}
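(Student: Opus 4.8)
The plan is to read off the claim directly from the adjoint dynamics at the terminal time, with no computation beyond a single substitution. The relevant ingredient is the differential equation~\eqref{ODE_lambda_theta} for the switching function, namely $\dot{\lambda}_\theta(t) = v\,(\lambda_x(t)\,\sin\theta(t) - \lambda_y(t)\,\cos\theta(t))$, valid for a.e. $t\in[0,t_f]$. First I would note that the right-hand side is a continuous function of $t$: by the Maximum Principle the adjoint $\lambda$ is continuous, and the state $\theta$ is absolutely continuous, so $\lambda_x$, $\lambda_y$, $\sin\theta$ and $\cos\theta$ are all continuous. Consequently $\lambda_\theta$ is in fact $C^1$ and the identity~\eqref{ODE_lambda_theta} extends to hold at the endpoint $t = t_f$, so that evaluating $\dot{\lambda}_\theta$ there is legitimate.

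Next I would simply substitute the transversality conditions recorded in~\eqref{ODE_lambda_x} and~\eqref{ODE_lambda_y}, i.e. $\lambda_x(t_f) = 0$ and $\lambda_y(t_f) = 0$. Inserting these into the right-hand side of~\eqref{ODE_lambda_theta} evaluated at $t = t_f$ gives $\dot{\lambda}_\theta(t_f) = v\,(0\cdot\sin\theta(t_f) - 0\cdot\cos\theta(t_f)) = 0$, which is exactly the assertion of the Fact.

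There is essentially no obstacle to overcome here; the statement is an immediate corollary of the terminal conditions $\lambda_x(t_f)=\lambda_y(t_f)=0$ together with the explicit form of $\dot\lambda_\theta$, the only (routine) point being the continuity remark that justifies pointwise evaluation at $t_f$. The interest of the Fact is therefore not in its derivation but in its use: since $\lambda_\theta$ is the switching function governing the optimal control through Proposition~\ref{prop:opt_contr}, the condition $\dot{\lambda}_\theta(t_f)=0$ constrains the behaviour of the control in a neighbourhood of the terminal time, a structural feature that the subsequent classification of bang, bang--singular and bang--singular--bang arcs will exploit.
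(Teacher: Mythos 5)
Your proof is correct and follows exactly the paper's own argument: substitute the transversality conditions $\lambda_x(t_f)=\lambda_y(t_f)=0$ from~\eqref{ODE_lambda_x}--\eqref{ODE_lambda_y} into the right-hand side of~\eqref{ODE_lambda_theta}. The added continuity remark justifying evaluation at $t=t_f$ is a minor (and reasonable) elaboration the paper omits, not a different route.
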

\begin{proof}
We have that $\dot{\lambda}_\theta(t_f) = v \left(\lambda_x(t_f)\,\sin\theta(t_f) - \lambda_y(t_f)\,\cos\theta(t_f)\right)$ from~\eqref{ODE_lambda_theta}, and that $\lambda_x(t_f) = \lambda_y(t_f) = 0$ from \eqref{ODE_lambda_x} and \eqref{ODE_lambda_y}, respectively, with direct substitution yielding the required result.
\end{proof}

\begin{proposition}  \label{prop:singular}
If the optimal control is singular, then
\begin{equation}  \label{eqn:theta_sing}
\theta_{\mbox{\footnotesize\rm sing}}(t) = \arctan\frac{\lambda_y(t)}{\lambda_x(t)}\,.
\end{equation}
\end{proposition}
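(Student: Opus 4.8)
The plan is to exploit the defining property of a singular arc: the switching function $\lambda_\theta$ vanishes identically on the singular interval, so all its time-derivatives vanish there as well. First I would observe that on a singular subinterval $[t',t'']$ we have $\lambda_\theta(t) = 0$ for a.e.\ $t$, and since $\lambda_\theta$ is continuous (indeed $\lambda\in W^{1,2}$), this forces $\dot\lambda_\theta(t) = 0$ on the same interval. The key step is then to substitute this into the adjoint equation~\eqref{ODE_lambda_theta}, which reads $\dot\lambda_\theta(t) = v\,(\lambda_x(t)\,\sin\theta(t) - \lambda_y(t)\,\cos\theta(t))$. Setting the right-hand side to zero and using $v>0$ yields the algebraic relation
\[
\lambda_x(t)\,\sin\theta(t) = \lambda_y(t)\,\cos\theta(t)\,.
\]

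From this relation I would solve for $\theta$ by dividing through. Dividing both sides by $\cos\theta(t)$ (equivalently, rearranging to $\tan\theta(t) = \lambda_y(t)/\lambda_x(t)$ provided $\lambda_x(t)\neq 0$) gives exactly the claimed expression $\theta_{\sing}(t) = \arctan(\lambda_y(t)/\lambda_x(t))$. This is the entire content of the proposition, so the derivation is short once the vanishing of $\dot\lambda_\theta$ is invoked.

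The main obstacle, and the point I would be most careful about, is the degenerate case where $\lambda_x(t) = 0$. If $\lambda_x(t) = \lambda_y(t) = 0$ simultaneously on the singular arc, the relation is satisfied trivially and $\theta_{\sing}$ is not determined by it; moreover one must check this does not conflict with the nontriviality condition $\lambda(t)\neq\mathbf 0$ guaranteed by the Maximum Principle. More delicately, even when $\lambda_x = 0$ but $\lambda_y\neq 0$, the relation forces $\cos\theta(t) = 0$, i.e.\ $\theta = \pm\pi/2$, which is precisely the limiting value of $\arctan(\lambda_y/\lambda_x)$ as $\lambda_x\to 0$; so the formula should be read in the extended (two-argument $\arctan$) sense consistent with the convention already adopted for~\eqref{beta}. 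I would therefore note that~\eqref{eqn:theta_sing} is understood with this convention and holds wherever $\lambda_x(t)\neq 0$, with the $\theta=\pm\pi/2$ case handled as the natural limit.

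Finally, I would remark that this characterization is only a \emph{first-order} consequence of singularity: it pins down $\theta$ but does not by itself determine the singular control $u$, which would require differentiating the relation $\dot\lambda_\theta = 0$ once more in time and substituting the state and adjoint dynamics~\eqref{eqnx}--\eqref{eqny} and~\eqref{ODE_lambda_x}--\eqref{ODE_lambda_y}. That higher-order computation is not needed for the present proposition and I would defer it.
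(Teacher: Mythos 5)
Your proposal is correct and follows essentially the same route as the paper: on a singular interval $\lambda_\theta \equiv 0$ forces $\dot\lambda_\theta = 0$, which via \eqref{ODE_lambda_theta} and $v \neq 0$ gives $\lambda_x\sin\theta - \lambda_y\cos\theta = 0$, solved for $\theta_{\mbox{\footnotesize\rm sing}}$. Your additional care about the degenerate case $\lambda_x(t)=0$ (reading the formula in the two-argument $\arctan$ sense) goes slightly beyond the paper's proof, which simply says ``solving this equation yields'' the result, but the argument is the same.
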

\begin{proof}
Suppose that $u(t)$ is singular for $t\in[t',t'']$.  Then $\lambda_\theta(t) = 0$, and $\dot{\lambda}_\theta(t) = 0$, on $t\in[t',t'']$, and so from~\eqref{ODE_lambda_theta}, also using $v\neq0$,
\[
\lambda_x(t)\,\sin\theta_{\sing}(t) - \lambda_y(t)\,\cos\theta_{\sing}(t) = 0\,.
\]
Solving this equation for $\theta_{\sing}(t)$ yields \eqref{eqn:theta_sing}, as required.
\end{proof}

\begin{remark}  \rm
As we stressed before, a solution to Problem~(P3) can only be found approximately by means of numerical methods.  However, the analytical information obtained so far on the optimality of a solution, in particular Propositions~\ref{prop:opt_contr} and \ref{prop:singular}, and Fact~\ref{deriv_switching} along with~\eqref{ODE_lambda_theta}, can suitably be used in checking whether or not an approximate solution obtained by a numerical method will (approximately, in other words, numerically) verify the optimality conditions.
\proofbox
\end{remark}

\subsection{Optimality Conditions for Problem~(P1)}

We have obtained above the necessary optimality conditions for Problem~(P3) first, since they are more general than the conditions we will obtain for Problem~(P1).  In other words, the optimality conditions for Problem~(P1) are in some sense a special case of those for Problem~(P3).  First, we define the state variable vector for Problem~(P1) as $\st := (x,y,z_1,z_2,z_3)$, for which the control variable is nothing but $\theta$.  We note that $s(t)\in\dR^5$. Then the Hamiltonian function $\Ht:\dR^5 \times \dR \times \dR^5 \to \dR$ for Problem~$(P1)$ is defined as
\begin{equation} \label{Hamtilde}
\Ht(\st,\theta,\lambdat) := \lambda_x\,v\,\cos\theta + \lambda_y\,v\,\sin\theta + \lambda_{z_1}\,\frac{x^2}{(x^2 + y^2)^2} + \lambda_{z_2}\,\frac{y^2}{(x^2 + y^2)^2} + \lambda_{z_3}\,\frac{x\,y}{(x^2 + y^2)^2}\,,
\end{equation}
with $\lambdat(t) := (\lambda_x(t),\lambda_y(t),\lambda_{z_1}(t),\lambda_{z_2}(t),\lambda_{z_3}(t))\in\dR^5$ as the adjoint variable vector. Similarly we define
\[
\Ht[t] := \Ht(\st(t),\theta(t),\lambdat(t))\,,
\]
and $\varphit:\dR^5\to\dR$ such that $\varphit(s(t_f)) := z_3^2(t_f) - z_1(t_f)\,z_2(t_f)$\,.  The adjoint variables in this case satisfy, similarly, the following ODEs.
\begin{subequations}
\begin{eqnarray}
&& \dot{\lambda}_x(t) = -\Ht_{x}[t] = -H_{x}[t]\,,\quad\ \  \lambda_x(t_f) = 0\,, \label{lambdat_x}\\[1mm]
&& \dot{\lambda}_y(t) = -\Ht_{y}[t] = -H_{y}[t]\,,\quad\ \  \lambda_y(t_f) = 0\,, \label{lambdat_y}\\[1mm]
&& \dot{\lambda}_{z_1}(t) = -\Ht_{z_1}[t] = -H_{z_1}[t]\,,\quad  \lambda_{z_1}(t_f) = -z_2(t_f)\,, \label{lambdat_z1}\\[1mm]
&& \dot{\lambda}_{z_2}(t) = -\Ht_{z_2}[t] = -H_{z_2}[t]\,,\quad  \lambda_{z_2}(t_f) = -z_1(t_f)\,, \label{lambdat_z2}\\[1mm]
&& \dot{\lambda}_{z_3} (t) = -\Ht_{z_3}[t] = -H_{z_3}[t]\,,\quad  \lambda_{z_3}(t_f) = 2\,z_3(t_f)\,. \label{lambdat_z3}
\end{eqnarray}
\end{subequations}
Again, similarly, since $H_{z_i} = 0$, for $i = 1,2,3$, one gets
$\lambda_{z_1}(t) = -z_2(t_f) =: -\zb_2$, $\lambda_{z_2}(t) = -z_1(t_f) =: -\zb_1$ and $\lambda_{z_3}(t) = 2\,z_3(t_f) =: 2\,\zb_3$, for all $t\in[0,t_f]$.

The Maximum Principle stated in the previous section applies to this case as follows: Suppose that the pair $(\st,\theta)\in W^{1,2}([0,t_f];\dR^5) \times L^2([0,t_f];\dR)$ is optimal for Problem~(P1).  Then there exists a continuous adjoint variable vector $\lambdat\in W^{1,2}([0,t_f];\dR^5)$ as defined in~\eqref{lambdat_x}--\eqref{lambdat_z3}, such that $\lambdat(t)\neq{\bf 0}$ for all $t\in[0,t_f]$, and that, for a.e.~$t\in[0,t_f]$,
\begin{equation}\label{eqn:theta_opt}
\theta(t) = \argmin_{\alpha} \Ht(\st(t),\alpha,\lambdat(t))\,,
\end{equation}
i.e.,
\begin{equation}\label{eqn:theta_opt1}
\Ht_\theta[t] := \frac{\partial\Ht}{\partial\theta}(\st(t),\theta(t),\lambdat(t)) = 0\,.
\end{equation}

\begin{proposition}  \label{prop:P1_opt_contr}
The optimal control for Problem~(P1) is given by
\begin{equation}  \label{eqn:theta_opt2}
\theta(t) = \arctan\frac{\lambda_y(t)}{\lambda_x(t)}\,,
\end{equation}
for a.e.~$t\in[0,t_f]$.
\end{proposition}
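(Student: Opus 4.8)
The plan is to read the formula straight off the first-order stationarity condition \eqref{eqn:theta_opt1} supplied by the Maximum Principle. Because $\theta$ is an unconstrained control in Problem~(P1), the minimizer in \eqref{eqn:theta_opt} is interior, so it must be a stationary point, $\Ht_\theta[t]=0$. The first thing I would note is that in the Hamiltonian \eqref{Hamtilde} only the pair of terms $\lambda_x\,v\cos\theta + \lambda_y\,v\sin\theta$ depends on $\theta$; the remaining three terms contain only the states $x,y$ and the (constant) adjoints $\lambda_{z_1},\lambda_{z_2},\lambda_{z_3}$, and hence vanish under $\partial/\partial\theta$.

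Differentiating gives
\begin{equation*}
\Ht_\theta[t] = v\bigl(\lambda_y(t)\cos\theta(t) - \lambda_x(t)\sin\theta(t)\bigr),
\end{equation*}
and setting this equal to zero, then cancelling $v>0$, leaves $\lambda_y(t)\cos\theta(t) = \lambda_x(t)\sin\theta(t)$. Whenever $\lambda_x(t)\neq 0$ this rearranges to $\tan\theta(t) = \lambda_y(t)/\lambda_x(t)$, which is exactly \eqref{eqn:theta_opt2}. The degenerate case $\lambda_x(t)=0$, with $\lambda_y(t)\neq 0$, forces $\cos\theta(t)=0$ and hence $\theta(t)=\pm\pi/2$, in agreement with the limiting value $\arctan(\pm\infty)$, and is recorded separately. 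I would also remark that this computation coincides with the one behind Proposition~\ref{prop:singular}: the singular condition $\dot\lambda_\theta=0$ there and the stationarity condition $\Ht_\theta=0$ here reduce to the same equation $\lambda_x\sin\theta = \lambda_y\cos\theta$.

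The step demanding the most care is verifying that \eqref{eqn:theta_opt2} returns a global \emph{minimizer} of $\alpha\mapsto\Ht(\st(t),\alpha,\lambdat(t))$, not merely a critical point. The map $\alpha\mapsto \lambda_x\,v\cos\alpha + \lambda_y\,v\sin\alpha$ is a single sinusoid, so its stationarity equation has two roots modulo $2\pi$ --- one minimizing and one maximizing --- and \eqref{eqn:theta_opt} selects the former. To pin down the correct branch I would compute the second derivative $\Ht_{\theta\theta} = -v\bigl(\lambda_x\cos\theta + \lambda_y\sin\theta\bigr)$ and retain only the root at which it is strictly positive. This is exactly where the single-argument $\arctan$ of \eqref{eqn:theta_opt2} must be interpreted through the sign-aware two-argument variant already invoked for the bearing equation~\eqref{beta}; making that branch choice explicit is the crux of converting the necessary condition into the stated closed form.
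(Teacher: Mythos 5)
Your proof is correct and follows essentially the same route as the paper's: invoke the stationarity condition $\Ht_\theta[t]=0$ from \eqref{eqn:theta_opt1}, note that only the terms $\lambda_x\,v\cos\theta+\lambda_y\,v\sin\theta$ in \eqref{Hamtilde} depend on $\theta$, and solve $\lambda_x\sin\theta=\lambda_y\cos\theta$ for $\theta$ (the paper writes the derivative with the opposite sign, which is immaterial once it is set to zero). Your additional second-order check $\Ht_{\theta\theta}>0$ and the explicit branch selection via the two-argument $\arctan$ are a worthwhile refinement of --- not a departure from --- the paper's one-line argument, which silently leaves the minimizer-versus-maximizer ambiguity of the single-argument $\arctan$ unresolved.
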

\begin{proof}
The proof follows from \eqref{eqn:theta_opt1} first by writing out the equation
\[
\Ht_\theta[t] = v \left(\lambda_x(t)\,\sin\theta(t) - \lambda_y(t)\,\cos\theta(t)\right) = 0\,,
\]
and then, with $v\neq 0$, by solving the equation for $\theta(t)$.
\end{proof}

\begin{remark}  \rm
We note from~\eqref{eqn:theta_opt2} and the transversality conditions $\lambda_x(t_f) = \lambda_y(t_f) = 0$ in \eqref{lambdat_x}--\eqref{lambdat_y} that the optimal control at the terminal point, $\theta(t_f)$, is indeterminate.
\proofbox
\end{remark}

\begin{corollary}  \label{cor:optcontr}
The optimal control u(t) for Problem~(P3) is a concatenation of bang arcs, with $\dot{\theta}(t) = a$ or $-a$, and {\em interior arcs}, with $\theta(t) = \arctan(\lambda_y(t) / \lambda_x(t))$, where $\lambda_x$ and $\lambda_y$ solve \eqref{ODE_lambda_x}--\eqref{ODE_lambda_y}.
\end{corollary}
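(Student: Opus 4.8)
The plan is to assemble the corollary directly from Proposition~\ref{prop:opt_contr} and Proposition~\ref{prop:singular}, using the continuity of the switching function $\lambda_\theta$ to organize the time horizon into the two advertised arc types. First I would observe that, since $\lambda_\theta:[0,t_f]\to\dR$ is continuous, its zero set is closed and its complement is open, hence a disjoint union of open intervals. On each such interval Proposition~\ref{prop:opt_contr} pins down $u$ unambiguously: $u=va$ where $\lambda_\theta<0$ and $u=-va$ where $\lambda_\theta>0$. Recalling the dynamics $\dot\theta=u/v$ imposed in Problem~(P3), these translate immediately into $\dot\theta=a$ and $\dot\theta=-a$, respectively, which are exactly the bang arcs in the statement.

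Next I would treat the complementary region where $\lambda_\theta=0$. Isolated zeros contribute only switching points and carry no arc of positive measure, so they do not affect the control on a set of positive measure. The remaining possibility is that $\lambda_\theta$ vanishes throughout a subinterval $[t',t'']$, which is precisely the singular case. Here Proposition~\ref{prop:opt_contr} leaves $u$ undetermined, but Proposition~\ref{prop:singular} supplies the missing information: on $[t',t'']$ one has $\theta(t)=\arctan(\lambda_y(t)/\lambda_x(t))$, with $\lambda_x,\lambda_y$ governed by \eqref{ODE_lambda_x}--\eqref{ODE_lambda_y}. These are the interior arcs, so named because the implied control $u=v\,\dot\theta$ then takes values in the interior of $[-va,va]$ rather than at the bounds; the identification of the singular arc with the \emph{interior} arc is thus immediate.

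Finally I would argue that $[0,t_f]$ is exhausted by bang arcs, interior (singular) arcs, and a set of isolated switching points of measure zero, so that the optimal control is, up to this negligible set, the claimed concatenation. The one delicate point — and the step I expect to require a standing regularity assumption rather than a genuine argument — is ruling out pathological zero sets of $\lambda_\theta$, for instance a nowhere-dense set of positive measure, which would obstruct a clean decomposition into a finite number of arcs. In practice one either invokes the (piecewise) real-analyticity of $\lambda_\theta$ along the optimal trajectory, so that its zeros are either isolated or fill a whole subinterval, or one simply posits finitely many arcs, as is standard in bang--singular analyses. Granting this, the concatenation structure follows and the corollary is proved.
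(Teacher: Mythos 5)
Your proof is correct, but it follows a different route from the paper's. You build the decomposition intrinsically within Problem~(P3): continuity of $\lambda_\theta$ splits $[0,t_f]$ into open intervals where $\lambda_\theta\neq 0$ (bang arcs via Proposition~\ref{prop:opt_contr}) and a zero set whose non-isolated portions are singular intervals governed by Proposition~\ref{prop:singular}; you then read ``interior'' as meaning the control takes values in the interior of $[-va,va]$. The paper's proof is shorter and hinges on a cross-problem identification you never invoke: it observes that the adjoint ODEs \eqref{lambdat_x}--\eqref{lambdat_y} for Problem~(P1) are identical to \eqref{ODE_lambda_x}--\eqref{ODE_lambda_y} for Problem~(P3), so the singular-control expression \eqref{eqn:theta_sing} of Proposition~\ref{prop:singular} coincides with the unconstrained optimal control \eqref{eqn:theta_opt2} of Proposition~\ref{prop:P1_opt_contr}; combining with Proposition~\ref{prop:opt_contr} then gives the corollary. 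What the paper's route buys is precisely the meaning of ``interior arc'' intended here: along a singular arc the observer follows the same law as the solution of the unconstrained Problem~(P1), i.e.\ the curvature bound is inactive and the path is that of the bound-free problem. What your route buys is self-containedness (no appeal to Proposition~\ref{prop:P1_opt_contr}) and more care about the structure of the zero set of $\lambda_\theta$ --- a measure-theoretic point the paper silently passes over; your explicit acknowledgement that ruling out pathological zero sets needs a regularity or finitely-many-arcs assumption is a fair and honest strengthening, not a gap, since the paper's own proof implicitly assumes the same clean arc structure.
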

\begin{proof}
We note that the ODEs \eqref{lambdat_x}--\eqref{lambdat_y} that $\lambda_x(t)$ and $\lambda_y(t)$ satisfy are the same as those in \eqref{ODE_lambda_x}--\eqref{ODE_lambda_y}.  Therefore, we conclude that the expression \eqref{eqn:theta_opt2} for $\theta(t)$ given in~Proposition~\ref{prop:P1_opt_contr} is the same as the expression \eqref{eqn:theta_sing} for $\theta_{\sing}(t)$ given in~Proposition~\ref{prop:singular} for the case of singular control for Problem~(P3).  Then combining this observation with the expression~\eqref{eqn:u_opt2} for $u_{\sing}(t)$ given in~Proposition~\ref{prop:opt_contr} furnishes the proof.
\end{proof}

\section{Numerical Experiments}  
\label{numerical}

For the numerical experiments, we consider the target localization example problem studied in~\cite{OshDav1999}:  An observer travelling at a constant speed of $v = 40$ m/s (or 144 km/h) is 5 km away from a stationary target.  For convenience the target's Cartesian coordinates are chosen to be the origin $(0,0)$ and those of the observer to be $(5,0)$.  Then the relative observer coordinates are nothing but $(x_0,y_0) = (5,0)$.

In what follows, we find optimal observer trajectories by solving Problems~(P1) and (P3).  We obtain numerical, i.e., approximate, solutions via Euler discretization of both problems, utilizing 1,000 grid points.  To solve the discretized problem (which is a finite-dimensional optimization problem with around 7,000 variables) we employ the optimization modelling language AMPL~\cite{AMPL}, which is paired up with the optimization software Ipopt~\cite{WacBie2006} that uses an interior point method.  Theory and applications of discretized solutions of constrained optimal control problems can be found in \cite{BanKay2013a, DonHagMal2000, Hager2000, KayMar2007, KayMau2014}.

\subsection{Numerical Solution of Problem~(P1)}
\label{sec:P1}  

Figure~\ref{fig:P1a} depicts discretized solutions of Problem~(P1) for various scenarios involving different lengths of the time interval $[0, t_f]$, with $t_f = 12.5, 25, 50, 75, 100, 112.5, 118.75$ s.  These times may be interpreted as the times allowed to make the bearing angle measurements, after which they can be used in estimating the target position.  In~\cite{OshDav1999, HamLiuHilGon1989}, numerical solutions of such problems are presented in terms of a nondimensional constant, $K = v\,t_f / r_0$, where $r_0$ is the initial range.  For the example we are studying, $r_0 = 5$.  In other words, the paths we generate are for $K = 0.1, 0.2, 0.4, 0.6, 0.8, 0.9, 0.95$, which are the same values as those used in~\cite{OshDav1999}.  As far as one can judge from the appearance, the observer paths shown in Figure~\ref{fig:P1a} are the same as those displayed in Figure~4 in~\cite{OshDav1999}.  

\begin{figure}[t!]
\begin{subfigure}{\textwidth}
\centering
\includegraphics[width=120mm]{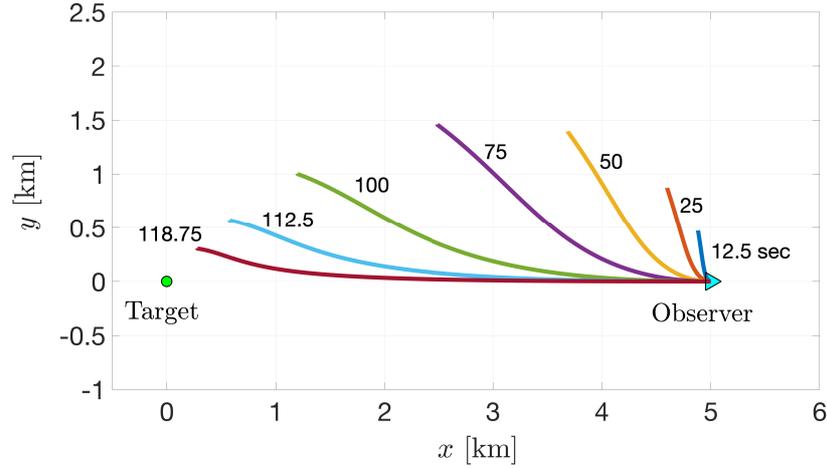}
\caption{\sf Optimal observer paths.}
\label{fig:P1a}
\end{subfigure}
\begin{subfigure}{\textwidth}
\centering
\includegraphics[width=120mm]{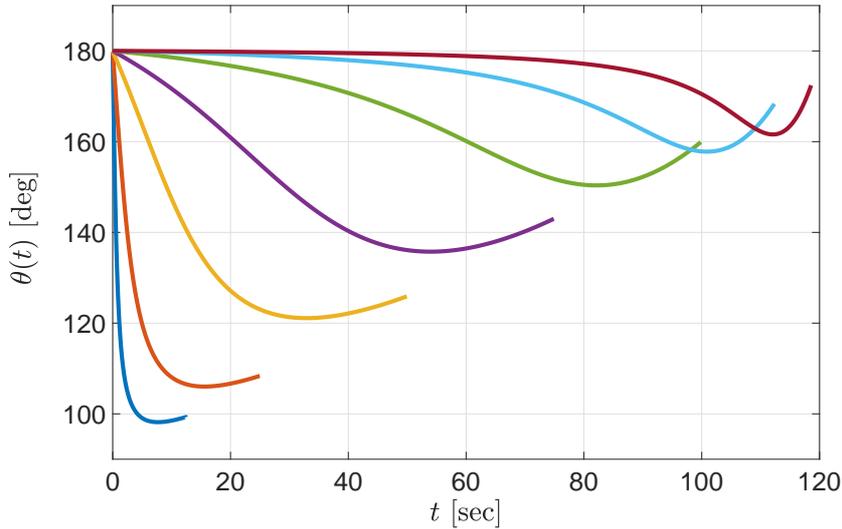}
\caption{\sf Optimal course histories.}
\label{fig:P1b}
\end{subfigure}
\caption{\sf Problem~(P1)---Optimal observer paths and course histories.}
\label{fig:P1}
\end{figure}

We have checked that these solutions indeed verify the necessary conditions of optimality for the continuous-time problem~(P1), in particular the expression given for $\theta(t)$ in Proposition~\ref{prop:P1_opt_contr}.  These conditions are not verified in~\cite{OshDav1999}.  Here, we have added the graph of $\theta(t)$ in Figure~\ref{fig:P1b}, which is otherwise not provided in~\cite{OshDav1999}.   Figure~\ref{fig:P1b} motivates/justifies an extension of Problem~(P1), by adding the initial course as well as a bound on the curvature, namely Problem~(P3), as elaborated further below.

It can be clearly seen from Figure~\ref{fig:P1b} that the optimal initial course $\theta(0)$ turns out to be 180 deg for all the different time intervals $[0, t_f]$ that was considered.  Solving these problems with a ``free'' initial course should perhaps be viewed as unnatural, because an observer would typically be mounted on a vessel/vehicle that cannot instantaneously ``turn around'' to position its heading to be at 180 deg.  In the example we are studying, the vehicle might as well be travelling east (at $40$ km/s!) rather than west, in which case $\theta(0) = 0$ deg instead of $\theta(0) = 180$ deg.  In practice, it will be necessary to prescribe $\theta(0)$ as the currently given course of the vehicle at $t = 0$, and one will have to find the evolution of the course for the horizon~$[0, t_f]$, which is required to attain maximum information, with this initial course.

Problem~(P1) does not impose a lower bound on the instantaneous turning radius $v/|\dot{\theta}(t)|$ of the observer, in other words, an upper bound on the rate of change of the course, or the instantaneous curvature, $|\dot{\theta}(t)|$.  Figure~\ref{fig:P1b} reveals that, especially when $t_f$ is small, the change in $\theta$ is abrupt, resulting in sharp turns.  In such cases, (i) the vehicle where the observer is mounted on may not be physically capable of achieving these sharp turns and (ii) the observer or sonar measurements may get corrupted because of these highly accelerated manoeuvres.  So it is natural to impose a bound on the turning rate.

Recall that both of the above concerns, namely the prescription of the initial course as well as imposition of an upper bound on the curvature, were included into the formulation of Problem~(P3), the solution of which for the current example, based on the example in~\cite{OshDav1999}, follows next.

\subsection{Numerical Solution of Problem~(P3)}

In Problem~(P3), we have prescribed the initial course $\theta(0) = 0$ deg, and imposed the upper bound for the curvature to be $5$ deg/s, for the example in~\cite{OshDav1999}. Optimal observer paths and course histories obtained as discretized solutions of Problem~(P3) are depicted in Figure~\ref{fig:P3}, for the terminal times, $t_f = 50, 80, 100, 120, 140, 150, 155, 160$ s.

\begin{figure}[t!]
\begin{subfigure}{\textwidth}
\centering
\includegraphics[width=120mm]{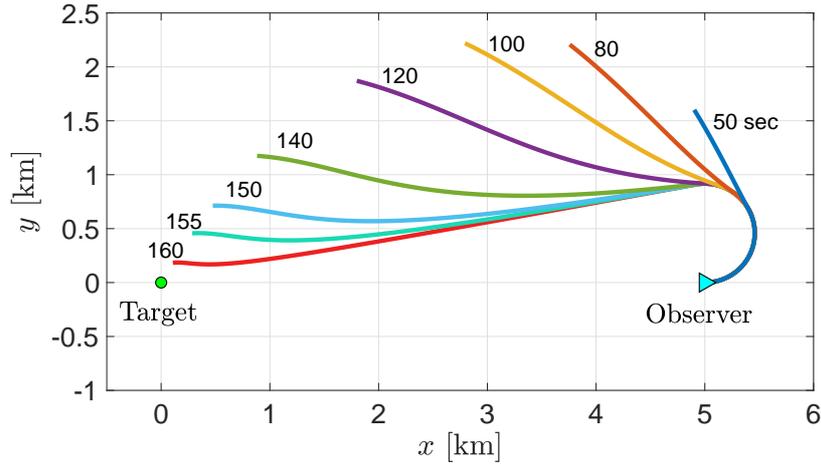}
\caption{\sf Optimal observer paths.}
\label{fig:P3a}
\end{subfigure}
\begin{subfigure}{\textwidth}
\centering
\includegraphics[width=120mm]{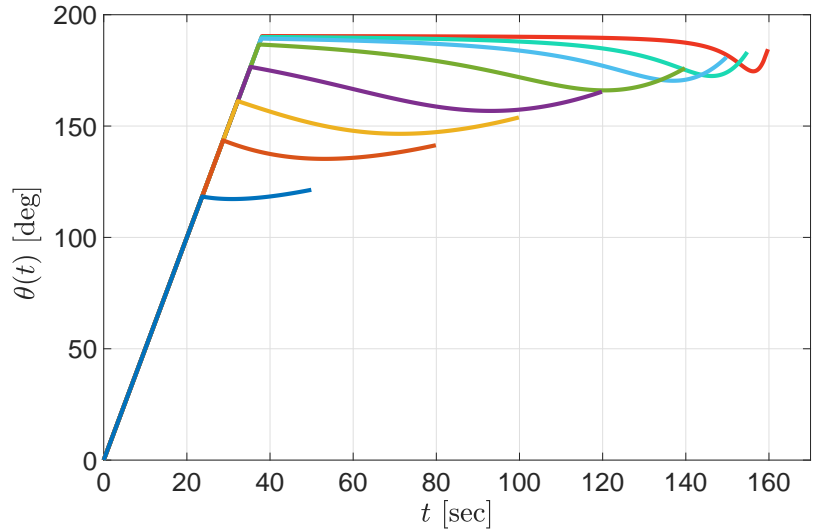}
\caption{\sf Optimal course histories.}
\label{fig:P3b}
\end{subfigure}
\caption{\sf Problem~(P3)---Optimal observer paths and course histories.}
\label{fig:P3}
\end{figure}

As can be seen in Figure~\ref{fig:P3a}, the initial segment of the path looks to be circular, which is verified by the linearity of $\theta$ in Figure~\ref{fig:P3b}, initially.  This is further reconfirmed from the graph of $\dot{\theta}(t)$ in Figure~\ref{fig:P3control}:  The initial segments of the trajectories corresponding to each $t_f$ is of constant curvature of 5 deg/s.  This equivalently corresponds to a constant turning radius of about 460 m for the observer vehicle.

\begin{figure}[t!]
\begin{subfigure}{\textwidth}
\centering
\includegraphics[width=120mm]{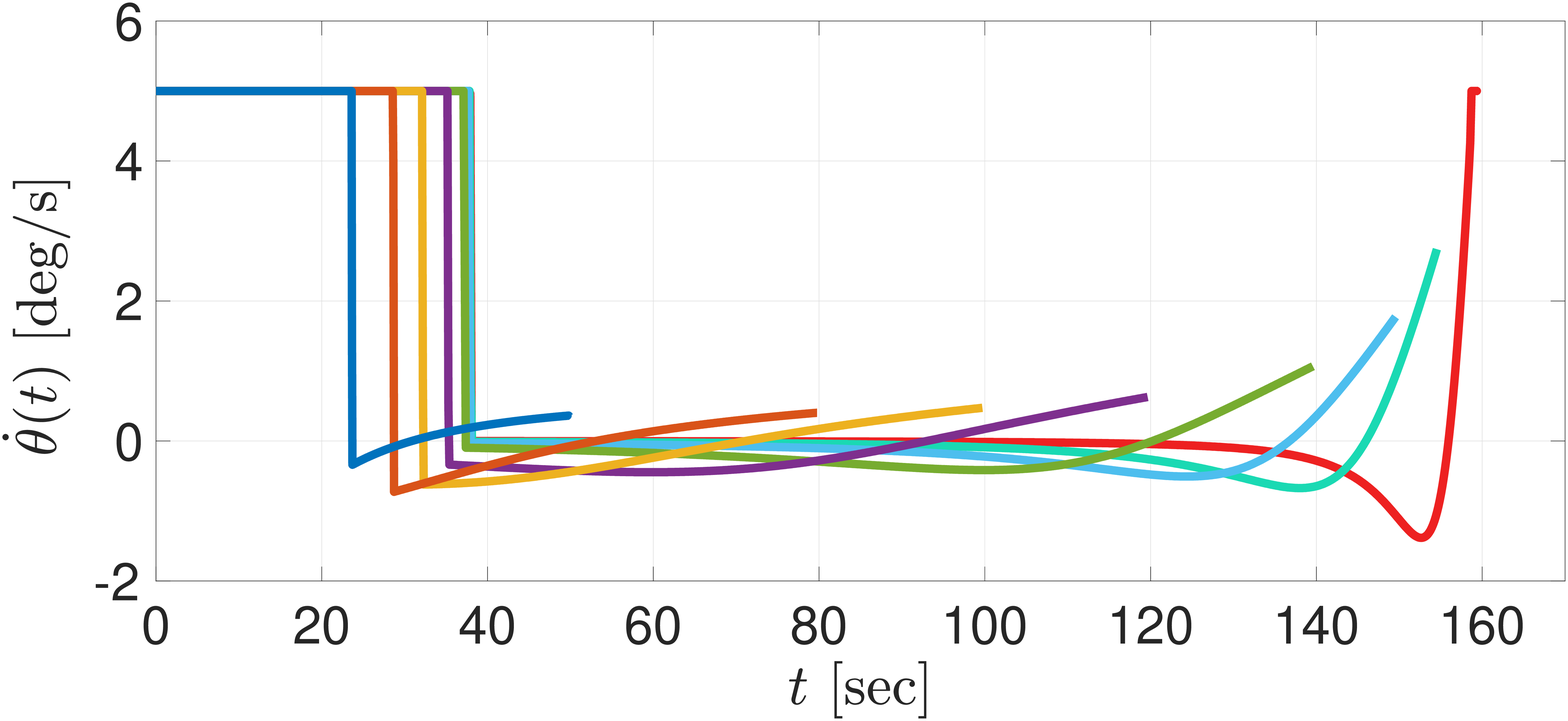}
\caption{\sf Optimal course speed $\dot{\theta}(t)$.}
\label{fig:P3controla}
\end{subfigure}
\begin{subfigure}{\textwidth}
\centering
\includegraphics[width=120mm]{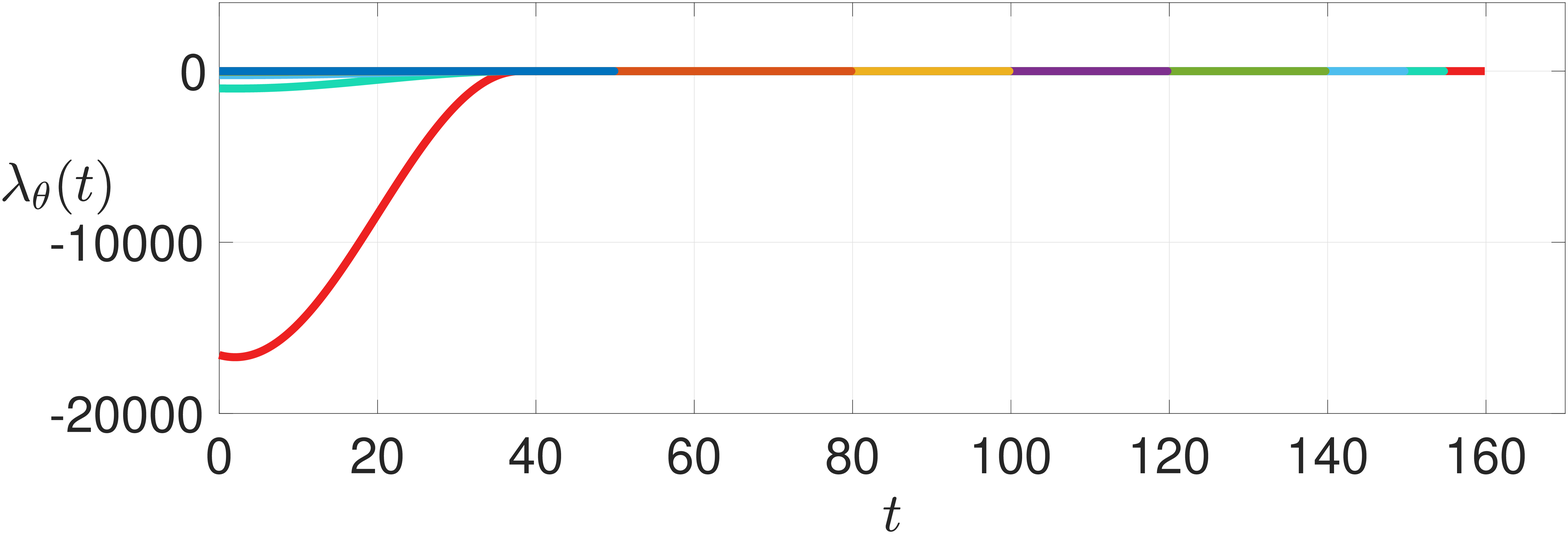}
\includegraphics[width=120mm]{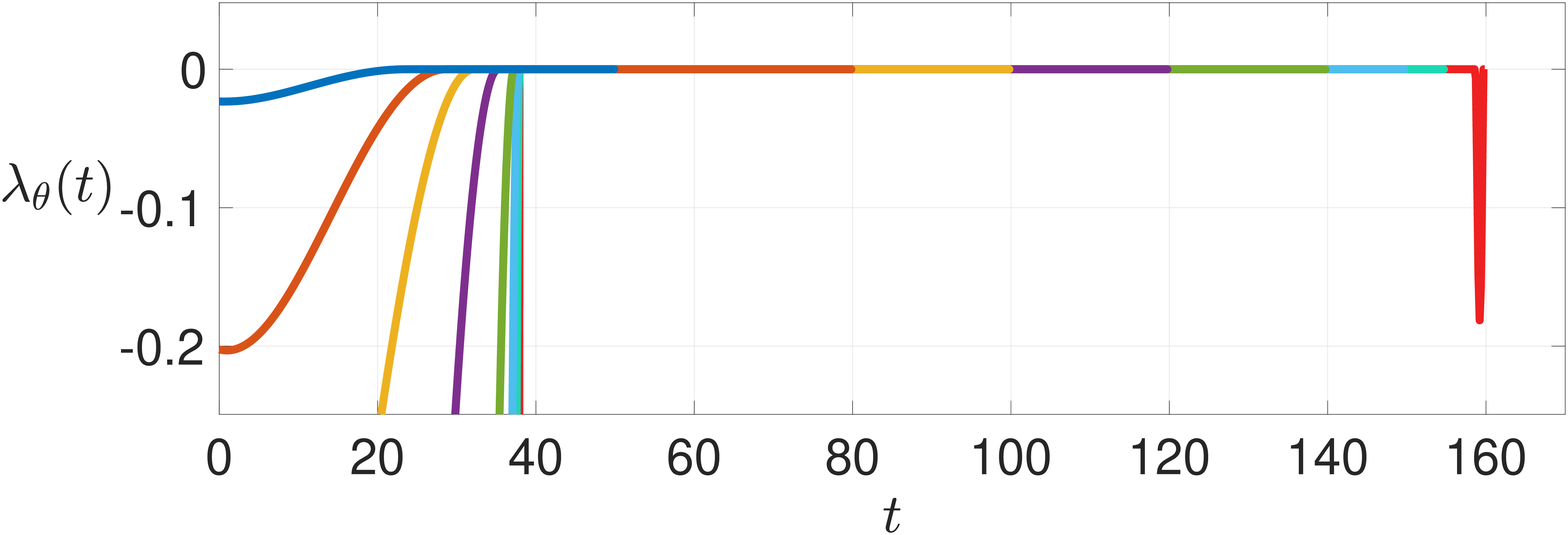}
\caption{\sf Switching function $\lambda_\theta(t)$.}
\label{fig:P3controlb}
\end{subfigure}
\caption{\sf Problem~(P3)---Optimal course speeds and switching functions.}
\label{fig:P3control}
\end{figure}

Figure~\ref{fig:P3control} conveys further information:  The optimal control switches from the bang arc $\dot{\theta}(t) = 5$ deg/s to singular control, which is an interior arc, i.e., the constraint on the curvature is inactive, as also described in Corollary~\ref{cor:optcontr}.  The numerical experiments conducted so far suggest that the observer trajectory reaches the target position with infinite information when $t_f \approx 163.3$.  

Broadly speaking, by observing the graphs in~Figure~\ref{fig:P3controla}, while the optimal control structure is of {\em bang--singular} type for $t_f < 160$, the structure of the optimal control for $t_f \ge 160$ appears to be of {\em bang--singular--bang} type.  A verification of this can be carried out by using the scaled graphs of the switching function $\lambda_\theta(t)$ given in Figure~\ref{fig:P3controlb}.

In Figure~\ref{fig:P3controlb}, we show two sets of graphs of the switching function $\lambda_\theta(t)$ corresponding to each of the eight values of $t_f$.  The vertical axis of the first set is scaled so that the switching function for the case when $t = 160$ is visible as a whole.  However, the vertical axis of the second graph is scaled in such a way that the switching times for the cases of each value of $t_f$ can be viewed relatively easily.  

Indeed, from the second set of graphs, along with the graphs in~Figure~\ref{fig:P3control}, we obtain a numerical/graphical verification of the optimal control law asserted in Proposition~\ref{prop:opt_contr}, by comparing the signs of the switching function in Figure~\ref{fig:P3controlb} and the graphs of the course speed in Figure~\ref{fig:P3controla}.  Over the initial segment where the switching times occur around 20 to 40 seconds depending on the value of $t_f$, it is clearly seen that $\dot{\theta}(t) = 5$ since $\lambda_\theta(t) < 0$, by the Maximum Principle.  We note that the singular optimal control is characterized by the segment where $\lambda_\theta(t) \equiv 0$.  See also Proposition~\ref{prop:singular} for an expression of the singular optimal control.

It is particularly interesting to observe that while the overall scale of the first set of graphs of $\lambda_\theta(t)$ is $2\times10^5$, by looking at the same graphs over a scale of $2\times10^{-1}$, it is possible to capture that $\lambda_\theta(t) < 0$ over the very short end-piece of the time horizon $[0,160]$.  This observation verifies that, by the Maximum Principle, $\dot{\theta}(t) = 5$ during the (short) end-piece of the time horizon $[0,160]$.  Thus, overall, we are able to reconfirm the {\em bang--singular--bang} type structure for the optimal control when $t_f = 160$.

Note that the Maximum Principle provides the necessary conditions of optimality.  So, what one can vouch for is the stationarity of a solution.  However, we describe the numerical solutions that we obtain here as {\em optimal} in this loose sense.

It turns out, via our numerical experiments, that there exist other solutions which also satisfy the Maximum Principle.  Of course, there are, first of all, symmetric solutions, namely the paths symmetric about the $x$-axis in Figure~\ref{fig:P3a}, which are somewhat trivial to figure.  On the other hand, Figure~\ref{fig:P3loca} depicts solutions which are not so trivial (in that they are not symmetric about the $x$-axis) and are at best only locally optimal.

\begin{figure}[t!]
\begin{subfigure}{\textwidth}
\centering
\includegraphics[width=120mm]{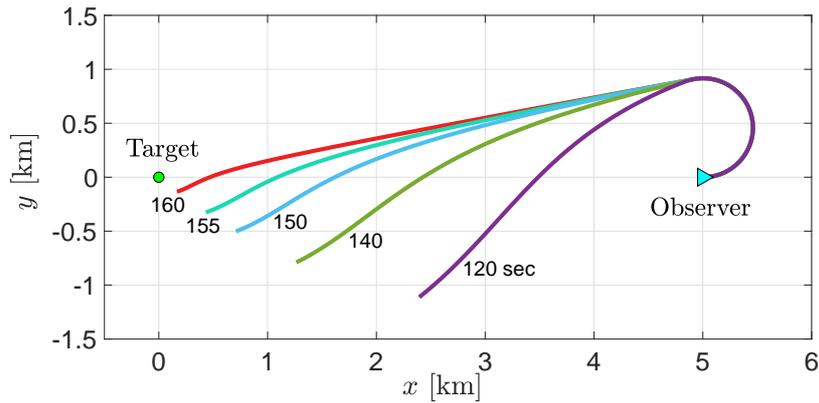}
\caption{\sf Locally-optimal observer paths.}
\label{fig:P3loca}
\end{subfigure}
\begin{subfigure}{\textwidth}
\centering
\includegraphics[width=120mm]{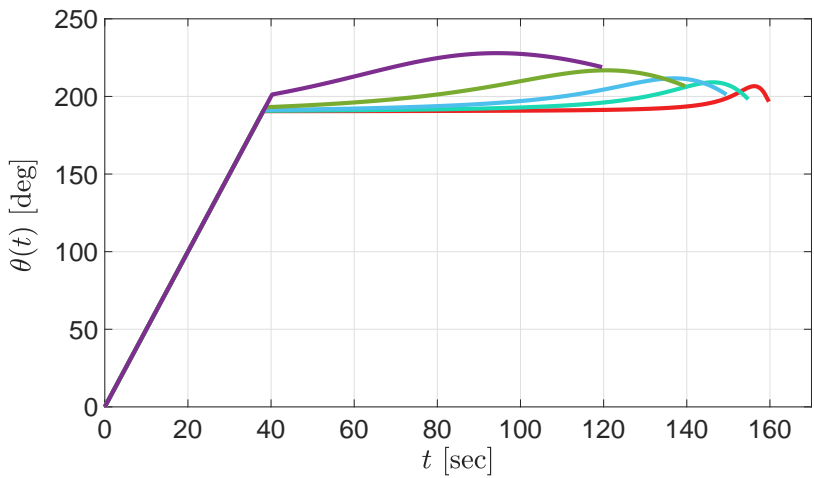}
\caption{\sf Locally-optimal course histories.}
\label{fig:P3locb}
\end{subfigure}
\caption{\sf Problem~(P3)---Locally-optimal observer paths and course histories.}
\label{fig:P3loc}
\end{figure}

In Table~\ref{table:objval}, we list the values of the ``normalized'' determinant of the Fisher information matrix, $\sigma^4\det(F(\theta))$, where $\sigma^2$ is the constant variance of the bearing angle measurement process.  For each of the solutions depicted in Figures~\ref{fig:P3loca} and \ref{fig:P3a}, the tabulated objective functional values are correct to four significant figures.  This level of accuracy in the optimal values was achieved by discretizing Problem~(P3) via the trapezoidal rule, which is a second-order approximation scheme as opposed to the first-order Euler discretization scheme.

\begin{table}[h]
\centering
{\small
\begin{tabular}{r | l l} 
  & \multicolumn{2}{c}{$\sigma^4\det(F(\theta))$} \\[1mm] \cline{2-3} \\ \\[-7mm]
 $t_f$ & Fig.~\ref{fig:P3a}  & Fig.~\ref{fig:P3loca} \\[1mm]
 \hline\hline \\ \\[-8mm]
 50 & \ \ \ \ \,0.03058 &  \\
 80 & \ \ \ \ \,0.2637 &  \\
100 & \ \ \ \ \,0.8822 &  \\
120 & \ \ \ \ \,3.149 & \ \ \ \ \,2.285 \\
 140 & \ \ \ 16.82 & \ \ \ 14.69 \\
 150 & \ \ \ 63.66 & \ \ \ 59.48 \\
 155 & \ \,183.2 & \ \,176.1 \\
 160 & 1376 & 1357 \\
 \end{tabular}
\caption{\sf Problem~(P3)---Locally-optimal objective functional values.}
\label{table:objval}}
\end{table}

The numbers listed in Table~\ref{table:objval} imply that the solutions presented in Figure~\ref{fig:P3a} are certainly better solutions. However, we would still like to note that the solution curves depicted in Figure~\ref{fig:P3loca} satisfy the Maximum Principle:  We observe, for the values of $t_f$ that was used, that the course speed $\dot{\theta}(t) = 5$ in Figure~\ref{fig:P3loccontrola}, while $\lambda_\theta(t) < 0$ in Figure~\ref{fig:P3loccontrolb}, and that the control is singular when $\lambda_\theta(t) \equiv 0$ over a time interval.  We also note that when $t_f = 160$ s the optimal control is of {\em bang--singular--bang} type, which is reconfirmed by the signs of the switching function $\lambda_\theta(t)$ in the second graph of Figure~\ref{fig:P3loccontrolb}.

\begin{figure}[t!]
\begin{subfigure}{\textwidth}
\centering
\includegraphics[width=120mm]{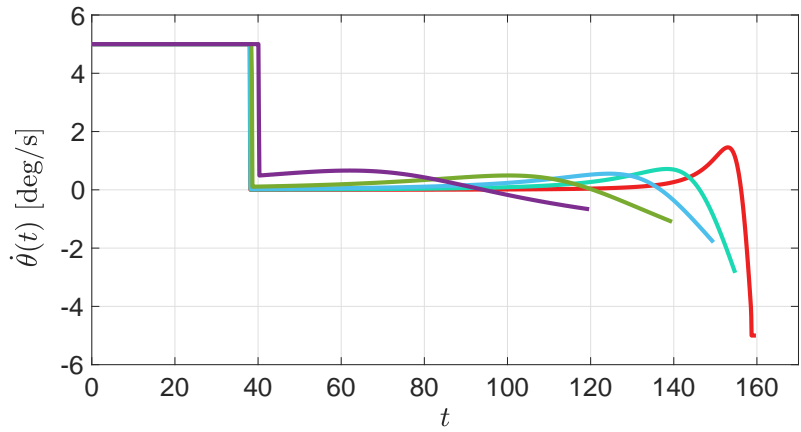}
\caption{\sf Locally-optimal course speed $\dot{\theta}(t)$.}
\label{fig:P3loccontrola}
\end{subfigure}
\begin{subfigure}{\textwidth}
\centering
\includegraphics[width=120mm]{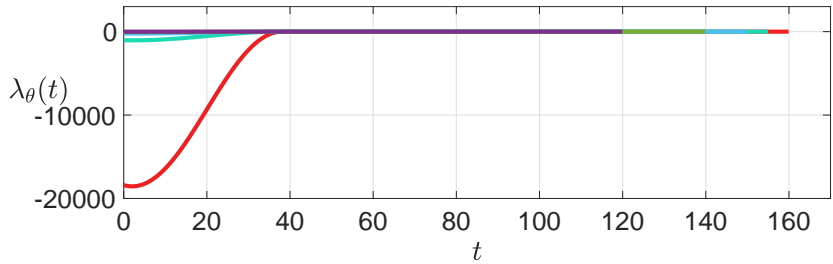}
\includegraphics[width=120mm]{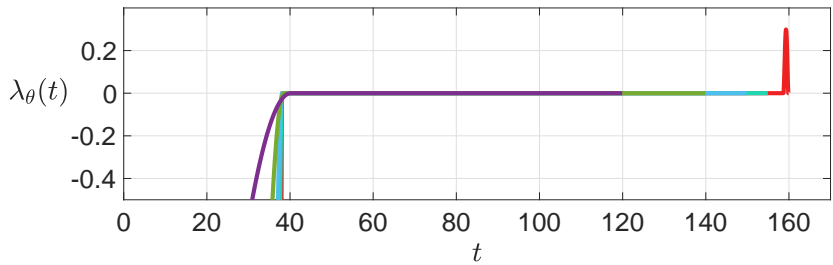}
\caption{\sf Switching function $\lambda_\theta(t)$.}
\label{fig:P3loccontrolb}
\end{subfigure}
\caption{\sf Problem~(P3)---Locally-optimal course speeds and switching functions.}
\label{fig:P3loccontrol}
\end{figure}

In all numerical solutions of Problem~(P3), we have also checked and reconfirmed that $\dot{\lambda}(t_f) = 0$ from Fact~\ref{deriv_switching} is indeed verified.

One final remark regarding computations is in order:  Discretized solutions over a coarse time grid can quickly provide the structure of a bang--singular or bang--singular--bang solution.  Once a coarse-grid solution revealing the structure is found, a more refined/accurate solution can be obtained by employing shooting-like methods as those provided in~\cite{KayMau2014, KayNoa2003, MauBueKimKay2005}.

\section{Conclusion and Discussion}
\label{conclusion}

We have studied the problem of maximizing information for an observer vehicle modelled as a point mass travelling at a constant speed and taking measurements of the bearing angle between the stationary target and the observer.  The optimal control model we have developed for this problem is different from those existing in the literature in that it prescribes the initial course, or heading angle, of the vehicle and it imposes a bound on the curvature, or a bound on the turning radius.

We derived the necessary conditions of optimality for the new problem via the Maximum Principle.  We obtained numerical solutions for an example configuration via Euler discretization and found that the optimal control for this example is either of {\em bang--singular} type or {\em bang--singular--bang} type.  We found out that there exist (nontrivially) multiple locally-optimal control solutions.  We verified numerically, or graphically, that the approximate solutions we have obtained satisfy the necessary conditions of optimality.

The following further extensions of the problem in this paper would be meaningful to make and study.

\noindent
{\bf Other measures of information.}  It would be interesting to maximize the trace of the Fisher information matrix, as another measure of information, instead of the determinant, which should be expected to lead to different optimal path solutions.

\noindent
{\bf Multiple observers.}  One would surely expect the information to increase if more than just one observer vehicle is employed.  Problem~(P3) can easily be extended to the case when there are multiple observer vehicles moving independently, subject to similar initial course and control constraints set for each vehicle.

\noindent
{\bf Moving target.}  The setting can be generalized to the case when the target is moving, in which the problem is referred to as {\em bearings-only tracking} rather than a {\em bearings-only localization}.  For example~\cite{PasCap1998} also considers the case when the target is moving along a straight line at constant speed, but of course without any of the other extensions we studied in the current paper.  To cater for a moving target, the objective functional of Problem~(P3) will need to be modified, as outlined in~\cite{PasCap1998}.  The new formulation might also be able to cater for more complicated manoeuvres of the target.

\noindent
{\bf State constraints.}  In many realistic situations, the observer may have to travel through the water surface, underwater, sky or terrain, which contains ``no-go'' areas modelled as constraints involving the states $x$ and $y$.  For example, the observer may have to stay a certain distance away from the target, as it happens in an example in~\cite{OshDav1999}.  In the example in~\cite{OshDav1999}, the only state constraint is given as $x^2(t) + y^2(t) \ge 4^2$, representing having to stay away from the threat from the target.  One may as well specify circular threat zones centred at various other points in a similar fashion. \\[1mm]
Problem~(P3) with state constraints is much more challenging both analytically and numerically, resulting in {\em boundary arcs} when a state constraint becomes active~\cite{HarSetVic1995}.  This extension would typically result in {\em bang--singular--boundary} arcs in a similar fashion to the type of solutions obtained for another example (a container crane) in~\cite{BanKay2013a}.

\noindent
{\bf Multiobjective optimization.}  Many optimization problems involve minimization or maximization of more than one objective which are conflicting in nature.  For instance, in addition to maximizing information in Problem~(P3), one may consider minimization of the terminal time $t_f$, ``simultaneously.''  This gives rise to a multiobjective optimal control problem, for which specialized theory and numerical methods are needed \cite{BonKay2010, KayMau2014}. It is interesting to note that the problem of minimizing the terminal time $t_f$, instead of maximizing information, with unit speed, i.e., $v = 1$, constitutes a special case of the Markov--Dubins problem \cite{Kaya2017, Kaya2019}.

\noindent
{\bf Formulation in polar coordinates.}  In some situations it might be more convenient to deal with the problem in polar coordinates, rather than Cartesian coordinates, although this may not constitute an extension on its own.  The first
two differential equations in Problem~(P3), which are in the Cartesian coordinates $x$ and $y$, can be replaced by those in the polar coordinates $r := (x^2 + y^2)^{1/2}$, and $\beta$, as defined in~\eqref{beta}:
\begin{eqnarray*}
&& \dot{r}(t) = v\,\cos(\beta(t) - \theta(t))\,,\qquad r(0) = r_0\,,
  \\[1mm]
&& \dot{\beta}(t) = \frac{v}{r(t)}\,\sin(\beta(t) - \theta(t))\,,\ \
   \beta(0) = \beta_0\,,
\end{eqnarray*}
where $r_0 := (x_0^2 + y_0^2)^{1/2}$ and $\beta_0 := \arctan(y_0/x_0)$, and \eqref{eqnx}--\eqref{eqny} have been used.  The Fisher information matrix, and the RHS's of the ensuing differential equations for $z_i$, $i=1,2,3$, in Problem~(P3) can also be simply written in terms of $r$ and $\beta$, by using $\sin\beta = y/r$ and $\cos\beta = x/r$.

\section*{Acknowledgments}
The author would like to offer his warm thanks to the anonymous reviewer for their careful reading of the manuscript and suggestions.  He acknowledges useful discussions with Sanjeev Arulampalam of Defence Science and Technology in 2016, after which he started to look at the problem studied in the current paper.


\begin{thebibliography}{30}

\bibitem{BanKay2013a}
Banihashemi, N, Kaya, CY.
Inexact restoration for Euler discretization of box-constrained optimal control problems,
{\em J. Optim. Theory Appl.}, 156, 726--760, 2013.

\bibitem{BonKay2010}
Bonnel, H, Kaya, CY.
Optimization over the efficient set of multi-objective convex optimal control problems.
{\em J. Optim. Theory Appl.}, 147, 93--11, 2010.

\bibitem{Clarke2013}
Clarke, F.
{\em Functional Analysis, Calculus of Variations and Optimal Control},
Springer-Verlag, London, 2013.

\bibitem{AMPL}
Fourer, R, Gay, DM, Kernighan, BW
{\em AMPL: A Modeling Language for Mathematical Programming, Second
  Edition}.
Brooks/Cole Publishing Company / Cengage Learning, 2003.

\bibitem{Dogancay2012}
Do{\u g}an{\c c}ay, K.
UAV path planning for passive emitter localization.
{\em IEEE Trans. Aero. Elect. Sys.}, 48(2), 1150--1166, 2012.

\bibitem{DonHagMal2000}
Dontchev, AL, Hager, WW, Malanowski, K.
Error bound for Euler approximation of a state and control constrained optimal control problem.
{\em Numer. Funct. Anal. Optim.}, 21(6), 653--682, 2000.

\bibitem{Hager2000}
Hager, WW.
Runge-Kutta methods in optimal control and the transformed adjoint system, 
{\em Num. Math.}, 87, 247--282, 2000.

\bibitem{HamLiuHilGon1989}
Hammel, SE, Liu, PT, Hilliard, EJ, Gong, KF.
Optimal observer motion for localization with bearings measurements.  
{\em Computers Math. Applic.}, 18(1--3), 171--180, 1989.

\bibitem{HarSetVic1995}
Hartl, RF, Sethi, SP, Vickson, RG.
A Survey of the maximum principles for optimal control problems with state constraints.
{\em SIAM Rev.}, 37, 181--218, 1995.

\bibitem{Hestenes1966}
Hestenes, MR.
{\em Calculus of Variations and Optimal Control Theory},
John Wiley \& Sons, New York, 1966.

\bibitem{Kaya2017}
Kaya, CY.
Markov--Dubins path via optimal control theory. 
{\em Comput.\ Optim.\ Appl.}, 68(3), 719--747, 2017.

\bibitem{Kaya2019}
Kaya, CY.
Markov--Dubins interpolating curves. 
{\em Comput.\ Optim.\ Appl.}, 73(2), 647--677, 2019.

\bibitem{KayMar2007}
Kaya, CY, Mart\'{\i}nez, JM.
Euler discretization for inexact restoration and optimal control.
{\em J. Optim. Theory Appl.}, 134, 191--206, 2007.

\bibitem{KayMau2014}
Kaya, CY, Maurer, H.
A numerical method for nonconvex multi-objective optimal control
  problems.
{\em Comput. Optim. Appl.}, 57(3), 685–-702, 2014.

\bibitem{KayNoa2003}
Kaya, CY, Noakes,  JL.
Computational method for time-optimal switching control.
{\em J. Optim. Theory Appl.}, 117, 69--92, 2003.

\bibitem{MauBueKimKay2005}
Maurer, H, B{\"u}skens, C, Kim,  J-HR, Kaya, CY.
Optimization methods for the verification of second-order sufficient
conditions for bang--bang controls.
{\em Optim. Contr. Appl. Meth.}, 26, 129--156, 2005.

\bibitem{Mordukhovich2006}
Mordukhovich, BS. 
{\em Variational Analysis and Generalized Differentiation II: Applications},
Springer-Verlag, Berlin, Heidelberg, 2006.

\bibitem{OsmMau2012}
Osmolovskii, NP, Maurer, H. 
{\em Applications to Regular and Bang--Bang Control: Second-Order Necessary and Sufficient Conditions in Calculus of Variations and Optimal Control},
SIAM Publications, Philadelphia, 2012.

\bibitem{PasCap1998}
 Passerieux, JM, van Cappel, D.
 Optimal observer maneuver for bearings-only tracking.  
 {\em IEEE Trans. Aero. Elect. Sys.}, 34(3), 777--788, 1998.

\bibitem{PonBolGanMis1986}
Pontryagin, LS, Boltyanskii, VG, Gamkrelidze, RV, Mishchenko, EF.
{\em The Mathematical Theory of Optimal Processes}, John Wiley \& Sons, New York, 1962.

\bibitem{RisAru2003}
Ristic, B., Arulampalam, MS.
Tracking a manoeuvring target using angle-only measurements:
algorithms and performance.
{\em Signal Processing}, 83, 1223--1238, 2003.

\bibitem{OshDav1999}
Oshman, Y, Davidson, P.
Optimization of observer trajectories for bearings-only target
  localization.
{\em IEEE Trans. Aero. Elect. Sys.}, 35(3), 892--902, 1999.

\bibitem{WacBie2006}
W\"achter, A, Biegler, LT.
On the Implementation of a primal-dual interior point filter line
search algorithm for large-scale nonlinear programming. 
{\em Math. Progr.}, 106, 25--57, 2006.

\bibitem{Vinter2000}
Vinter, RB.
{\em Optimal Control}.
Birkh\"{a}user, Boston, 2000.

\end{thebibliography}
\end{document}